\newcommand{\metrica}[1]{g\left(#1\right)}
\title{Non-compact inaudibility of weak symmetry and commutativity via generalized Heisenberg groups}
\author{Teresa Arias-Marco and José Manuel Fernández-Barroso\\
	\small{Department of Mathematics,}\\
	\small{University of Extremadura,} \\
	\small{Badajoz, Spain}\\
	\small{ariasmarco@unex.es, ferbar@unex.es}}
\date{}
\newtheorem{theorem}{Theorem}[section]
\newtheorem{proposition}[theorem]{Proposition}
\theoremstyle{definition}
\theoremstyle{definition}
\newtheorem{remark}[theorem]{Remark}
\theoremstyle{definition}
\theoremstyle{definition}
\newcommand{\z}{\mathfrak{z}}
\newcommand{\vgot}{\mathfrak{v}}
\newcommand{\n}{\mathfrak{n}}
\begin{document}
	
	\maketitle
	
	\begin{abstract}
		Two Riemannian manifolds are said to be isospectral if there exists a unitary operator which intertwines their Laplace-Beltrami operator.
		In this paper, we prove in the non-compact setting the inaudibility of the weak symmetry property and the commutative property using an isospectral pair of 23 dimensional generalized Heisenberg groups.
	\end{abstract}
	
	\textbf{Keywords:} Laplace operator; Isospectral Riemannian manifolds; Weakly-symmetric spaces; Commutative spaces; g.o. spaces
	
	\textbf{MSC2020:}
	58J53; 53C25; 58J50.
	
	\section{Introduction}
	
	Locally symmetric spaces are Riemannian manifolds whose local geodesic symmetries are local isometries. \textit{Symmetric-like} spaces are Riemannian manifolds with a geometric or algebraic property that locally symmetric spaces satisfy. Weakly symmetric and commutative spaces are examples of symmetric-like spaces. Selberg in \cite{S.56} introduced the notion of \textit{weakly symmetric} spaces as those on which any two points can be interchanged by an isometry. In the same paper, Selberg proved that the algebra of all invariant differential operators is commutative in every weakly symmetric space, these spaces are known as \textit{commutative} spaces. 
	
	Kaplan, in \cite{K.81}, developed examples of non locally symmetric Riemannian manifolds, namely \textit{generalized Heisenberg groups}. The study of symmetric-like properties in generalized Heisenberg groups has been studied by many authors (e.g. \cite{Rie.84},\cite{R.85},\cite{BTV.95},\cite{BRV.98} ).
	
	Two Riemannian manifolds, $M$ and $M'$ are said to be \textit{isospectral} if there exists a unitary operator $T:L^2(M')\to L^2(M)$ which intertwines their Laplacians, $\Delta$ and $\Delta'$, respectively, this is such that $T\circ\Delta'=\Delta\circ T$. If $M$ and $M'$ are compact Riemannian manifolds, the definition of being isospectral is equivalent to the fact that both manifolds have the same spectrum. In particular, Gordon and Wilson in \cite{GW.97} studied the isospectrality between pairs of generalized Heisenberg groups. Szabó in \cite{Sz.99} introduced a new example of isospectral closed Riemannian manifolds which can be seen as compact submanifolds of generalized Heisenberg groups. Szabó's examples were also introduced independently by Gordon, Gornet, Schueth, Webb and Wilson in \cite{GGSWW.98}, using a different approach.
	
	A geometric property is said to be \textit{inaudible}, or it cannot be heard, when one can find isospectral Riemannian manifolds such that one of them satisfies that property and the other does not. In \cite{AS.10}, the authors proved the inaudibility of some symmetric-like properties using Szabó's examples. In this paper, we rely on the work done in \cite{GW.97}, and we use a pair of isospectral generalized Heisenberg groups of dimension 23 to develop our main result:
	
	\begin{center}
		\textit{The property of being weakly symmetric and the property of being commutative are inaudible properties on non-compact Riemannian manifolds.}
	\end{center}
	
	In Section \ref{sec:isospectrality}, we treat the isospectrality between generalized Heisenberg groups. For that, we first study the isospectrality between compact 2-step nilmanifolds (i.e. quotients of 2-step nilpotent Lie groups) whose coverings are generalized Heisenberg groups. Finally, in Section \ref{sec:final}, we remember the main results concerning the study of symmetric-like properties on generalized Heisenberg groups, and we prove our main result.
	
	\section{Isospectrality on generalized Heisenberg groups}\label{sec:isospectrality}
	
	A well studied kind of manifolds are generalized Heisenberg groups, which are particular examples of 2-step nilpotent Lie groups. These manifolds were introduced by Kaplan in \cite{K.81} (for a more detailed information, consult \cite{BTV.95}). In this section we show how they are constructed, and how they are  classified according to their dimension.
	
	Let consider $\vgot$ and $\z$ real vector spaces which are orthogonal with respect to an inner product $g$, and a linear map $j:\z\to\mathfrak{so(v)}$. Let denote by $\n$ the real vector space $\vgot\oplus\z$. Using this $j$ map, one can define a Lie bracket $[\cdot,\cdot]$ on $\n$ by
	\begin{equation}\label{eq:relacionfundamenta-j-corchete}
		\metrica{[X,Y],Z}=\metrica{j_{Z}X,Y},
	\end{equation}
	for each $X,Y\in\vgot$ and each $Z\in\z$. This Lie bracket makes $\n$ a 2-step nilpotent Lie algebra, this is $[\n,\n]\subset \z$ and $[\n,\z]=0$. Thus, $\z$ is also known as the center of $\n$. If we need to distinguish the Lie algebra or the Lie bracket depending on the $j$ map, we will denote it by $\n(j)$ and $[\cdot,\cdot]^j$, respectively.
	
	$N(j)$ is the 2-step nilpotent Lie group whose Lie algebra is $\mathfrak{n}(j)$, and together with the metric induced by the inner product $g$, that we denote it again by $g$, the pair $(N(j),g)$ is a Riemannian manifold. The usual Lie group exponential map $\exp^j:\mathfrak{n}(j)\to N(j)$ which satisfies the Cambell--Baker--Hausdorff formula
	$$
	\exp^j(X)\cdot\exp^j(Y)=\exp^j\left(X+Y+\frac{1}{2}[X,Y]^j\right),
	$$
	is a diffeomorphism due to $N(j)$ is simply-connected and 2-step nilpotent. We will denote $\exp$ instead of $\exp^j$, if there is no confusion with $j$.
	
	A 2-step nilpotent Lie algebra, $\n$, is said to be a \textit{generalized Heisenberg algebra} when, additionally, $j$ satisfies the following relation
	\begin{equation}\label{j2-relation}
		j_{Z}^2=-\|Z\|^2\textup{Id}_\mathfrak{v},
	\end{equation}
	for every $Z\in\z$. In that case, its connected, simply-connected 2-step nilpotent Lie group, $N(j)$, is a \textit{generalized Heisenberg group}, and $j$ is a map of Heisenberg type.
	
	Generalized Heisenberg algebras, $\n=\vgot\oplus\z$, can be classified using the classification of representations of Clifford algebras over the real vector space $\z$ attached to a negative definite quadratic form $-\|\z\|^2$. The classification can be found in \cite{ABS.64}, but we include it here:
	
	\begin{enumerate}\renewcommand{\labelenumi}{\it \roman{enumi})}
		\item If $\dim\mathfrak{z}\not\equiv3\mod4$, then the Clifford algebra $Cl(\mathfrak{z})$ has a unique irreducible Clifford module $\mathfrak{v}_0$ up to equivalence, and the generalized Heisenberg algebra is obtained up to isomorphism, taking the direct sum of $p$ times the irreducible Clifford module, this is, $\mathfrak{v}\cong(\mathfrak{v}_0)^p,p\geq1.$
		\item If $\dim\mathfrak{z}\equiv3\mod4$, then the Clifford algebra $Cl(\mathfrak{z})$ has two non-equivalent irreducible Clifford modules $\mathfrak{v}_1$ and $\mathfrak{v}_2$, up to equivalence. The generalized Heisenberg algebras are obtained up to isomorphism, taking $\mathfrak{v}\cong(\mathfrak{v}_1)^p\oplus(\mathfrak{v}_2)^q$, with $p\geq0,q\geq0,p+q\geq1$, which is isomorphic to $\mathfrak{v}\cong(\mathfrak{v}_1)^q\oplus(\mathfrak{v}_2)^p$ due to $\mathfrak{v}_1$ and $\mathfrak{v}_2$ have the same dimension. In this case, we name the generalized Heisenberg algebras as $\mathfrak{n}_{p,q}$.
	\end{enumerate}
	
	With this notation, $\mathfrak{v}$ is said to be \textit{isotypic} if its Clifford module structure is irreducible. When $\dim\mathfrak{z}\not\equiv3\mod4$, $\mathfrak{v}$ is trivially isotypic. If $\dim\mathfrak{z}\equiv3\mod4$, $\mathfrak{v}$ is isotypic when $p=0$ or $q=0$, with $p+q\geq1$.
	
	Note that, in the case when $\dim\z\equiv3\mod4$, if we fix non-negative integers $(p_1,q_1)$ and $(p_2,q_2)$ such that $p_1+q_1=p_2+q_2$, then we obtain generalized Heisenberg algebras of the same dimension, $\n_{p_1,q_1}$ and $\n_{p_2,q_2}$. These algebras are isomorphic if and only if $(p_2,q_2)=\{(p_1,q_1),(q_1,p_1)\}$.
	
	Now, we want to find a pair of isospectral and non-isometric 2-step nilmanifolds whose Riemannian coverings are generalized Heisenberg groups. We also show  the isospectrality between those Riemannian coverings which are non-compact.
	
	Following Wilson \cite{W.82} and Gordon \cite{G.93}, we say that a Riemannian manifold is a 2-step \textit{nilmanifold} if it is a quotient of a simply-connected 2-step nilpotent Lie group $N(j)$ by a possible trivial discrete subgroup $\Gamma(j)$,  together with a left invariant Riemannian metric $g$. We denote it by $N^j=(N(j)/\Gamma(j),g)$ where $\Gamma(j)=\exp^j(\mathcal{M}+\mathcal{L})$ with $\mathcal{M}$ and $\mathcal{L}$ cocompact lattices in $\mathfrak{v}$ and $\mathfrak{z}$, respectively. 
	
	Wilson \cite{W.82} proved that two simply-connected Riemannian 2-step nilmanifolds are isometric if and only if the associated metric Lie algebras are isomorphic. According to the classification of the generalized Heisenberg algebras in terms of the attached Clifford algebra, there are up to isomorphisms one generalized Heisenberg algebra in each dimension when $\dim\z\not\equiv 3\mod4$, and there are two Lie algebras when $\dim\z\equiv 3\mod4$. We will center our attention in the case when $\dim\z\equiv 3\mod4$.
	
	Gordon introduced in \cite{G.93} the following map in order to work with the isotypic and the non isotypic representation of $\mathfrak{v}$ at the same time. Let $A$ denotes the quaternion numbers $\mathbb{H}$ or the octonion numbers $\mathbb{O}$. Suppose that $\mathfrak{v}=(A)^p\oplus(A)^q, p,q\geq0,p+q\geq1, p,q\in\mathbb{N}$, and the center is the pure elements of $A$, $\mathfrak{z}=A^*$. The map $j^{(p,q)}:\mathfrak{z}\to\mathfrak{so(v)}$ is given by
	\begin{equation}\label{eq:aplicacionGordon}
		j^{(p,q)}_Z(X_1,\dots,X_p,X_{p+1},\dots,X_{p+q})=(Z\cdot X_1,\dots,Z\cdot X_p,X_{p+1}\cdot Z,\dots,X_{p+q}\cdot Z),
	\end{equation}
	where $Z\in\mathfrak{z},X_i\in A, i=1,\dots, p+q$, and $\cdot$ is the usual multiplication in $A$. This map is always of Heisenberg type independently of the values of $p$ and $q$.
	
	We denote by $\mathfrak{n}(p,q)$ the generalized Heisenberg algebra $\mathfrak{n}_{p,q}(j^{(p,q)})$, by $N(p,q)$ the generalized Heisenberg group $\exp(\mathfrak{n}(p,q))$, and by $\mathcal{M}_{p,q}+\mathcal{L}$ the lattice in $\n(p,q)$ spanned by its integer standard basis. The 2-step nilmanifolds of Heisenberg type obtained by taking the cocompact quotient of $N(p,q)$ by $\Gamma_{p,q}:=\exp(\mathcal{M}_{p,q}+\mathcal{L})$, with its left invariant Riemannian metric, are denoted by $N^{p,q}$. With this notation, Gordon proved in \cite{G.93} the following result.
	
	\begin{theorem}
		$N^{p,q}$ and $N^{p',q'}$ are isospectral if $p+q=p'+q'$. Moreover, they are locally isometric if and only if $(p',q')=\{(p,q),(q,p)\}$.
	\end{theorem}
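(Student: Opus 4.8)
The plan is to establish the two assertions separately: the isospectrality through an explicit description of the Laplace spectrum via the representation theory of compact nilmanifolds, and the local isometry classification by reduction to the Clifford-algebraic classification recalled above.

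For the isospectrality, I would first decompose $L^2(N^{p,q})$ under the right regular representation of $N(p,q)$. Since the center of $\n(p,q)$ is $\z$ and $[\n,\n]=\z$, this decomposition splits into a summand on which the center acts trivially and a summand on which it acts by the nontrivial characters indexed by the dual lattice $\mathcal{L}^*$. On the trivial-central-character part the Laplacian restricts to the Laplacian of the flat torus $\vgot/\mathcal{M}_{p,q}$; because $\mathcal{M}_{p,q}$ is the standard integer lattice in $\vgot\cong\mathbb{R}^{(p+q)\dim A}$, this contribution depends only on the dimension $(p+q)\dim A$ and so is unchanged under the hypothesis $p+q=p'+q'$.

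Next, for each nonzero $\mu\in\mathcal{L}^*$, I would compute the eigenvalues of $\Delta$ on the isotypic component carrying the central character determined by $\mu$. This reduces to diagonalizing a family of harmonic-oscillator type operators whose spectrum is governed by the eigenvalues of the skew-symmetric map $j_\mu$ on $\vgot$ together with the lattice data in $\vgot$ and $\z$. Here the Heisenberg-type condition \eqref{j2-relation} is decisive: it forces $j_\mu^2=-\|\mu\|^2\textup{Id}_\vgot$, so the eigenvalues of $j_\mu$ are $\pm i\|\mu\|$, each with multiplicity $\tfrac12\dim\vgot$, independently of $p$ and $q$. Consequently the resulting eigenvalues and their multiplicities depend only on $\|\mu\|$, on $\dim\vgot$, and on the standard lattices $\mathcal{M}$ and $\mathcal{L}$, all of which coincide for $N^{p,q}$ and $N^{p',q'}$ whenever $p+q=p'+q'$. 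Matching the two spectra with multiplicities then yields isospectrality. For the local isometry statement I would use that the universal Riemannian cover of $N^{p,q}$ is the simply-connected group $(N(p,q),g)$, so $N^{p,q}$ and $N^{p',q'}$ are locally isometric if and only if these covers are isometric; by Wilson's theorem recalled above this happens exactly when the metric Lie algebras $\n(p,q)$ and $\n(p',q')$ are isomorphic, and by the Clifford-module classification recalled in the excerpt this isomorphism holds if and only if $(p',q')\in\{(p,q),(q,p)\}$.

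I expect the main obstacle to be the explicit eigenvalue computation of the non-central part: organizing the spectrum so that the Heisenberg-type spectral degeneracy of $j_\mu$ absorbs all dependence on the $(p,q)$ splitting, and in particular handling the multiplicities that arise from projecting the lattice $\mathcal{M}$ onto the $\pm i\|\mu\|$-eigenspaces of $j_\mu$ without any reference to the finer Clifford-module structure.
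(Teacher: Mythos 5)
Your proposal cannot be compared against a proof in the paper itself, because the paper does not prove this theorem: it is quoted from Gordon \cite{G.93}. Measured against Gordon's original argument, your plan follows essentially the same route --- decompose $L^2$ of the compact quotient by characters of the central torus, identify the trivial-character part with the flat torus $\mathfrak{v}/\mathcal{M}_{p,q}$, and treat the nontrivial characters by harmonic-oscillator analysis, with the Heisenberg-type identity $j_\mu^2=-\|\mu\|^2\mathrm{Id}_{\mathfrak{v}}$ forcing the eigenvalues of $j_\mu$ to be $\pm i\|\mu\|$ independently of $(p,q)$. The local-isometry half is also the intended argument (Wilson's theorem plus the Clifford-module classification); two small points to make explicit there are (i) that local isometry of the compact quotients is equivalent to isometry of the universal covers, which uses that these are complete, real-analytic, locally homogeneous spaces, and (ii) that Wilson's theorem is about \emph{metric} Lie algebras, so for the ``if'' direction you must check that the abstract isomorphism $\mathfrak{n}_{p,q}\cong\mathfrak{n}_{q,p}$ can be realized isometrically with respect to the standard inner products (it can, via octonion conjugation combined with swapping factors).

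The one place where your sketch does not yet close is precisely what you flag as ``the main obstacle,'' and it deserves a name. Equality of the eigenvalues of $j_\mu$ gives equality of the possible Laplace eigenvalues on each central character, but the content of the theorem is equality of \emph{multiplicities}, and these are not obtained by ``projecting $\mathcal{M}$ onto the $\pm i\|\mu\|$-eigenspaces of $j_\mu$.'' The correct tool is the multiplicity formula for 2-step nilmanifolds with split lattices (Richardson--Moore type, in the form used by Gordon \cite{G.93} and Gordon--Wilson \cite{GW.97}): since $\Gamma_{p,q}=\exp(\mathcal{M}_{p,q}+\mathcal{L})$ splits, the representation with nontrivial central character $\mu\in\mathcal{L}^*$ occurs in $L^2(N^{p,q})$ with multiplicity equal to the absolute value of the Pfaffian of the alternating form $(X,Y)\mapsto\langle\mu,[X,Y]\rangle$ computed with respect to a $\mathbb{Z}$-basis of $\mathcal{M}_{p,q}$. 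For a map of Heisenberg type one has $\det j_\mu=\|\mu\|^{\dim\mathfrak{v}}$, so this Pfaffian equals $\|\mu\|^{\dim\mathfrak{v}/2}$ times the covolume of $\mathcal{M}_{p,q}$; both factors agree for $(p,q)$ and $(p',q')$ with $p+q=p'+q'$ because both lattices are the standard integer lattices. Once this formula is invoked, your decomposition matches the two spectra with multiplicities and the plan becomes a complete proof; without it (or the equivalent heat-trace/Poisson-summation computation), the isospectrality claim is not established.
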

	
	\begin{remark}
		This result allows us to find pairs of isospectral 2-step nilmanifolds constructed using generalized Heisenberg groups. One possible situation in the case $\dim\z\equiv 3\mod4$ is
		$$\xymatrix{ N(p,q)\ar[d]& N(p+q,0) \ar[d]\\ N^{p,q}\ar@{~}[r]& N^{p+q,0} }$$
		where $N(p,q)$ and $N(p+q,0)$ with $p\geq0,q\geq0,p+q\geq1$, are the Riemannian covering of $N^{p,q}$ and $N^{p+q,0}$, respectively, and $\xymatrix{N^{p,q}\ar@{~}[r]& N^{p+q,0} }$ means that $N^{p,q}$ and $N^{p+q,0}$ are isospectral and not locally isometric in the compact sense. In particular, if $A=\mathbb{H}$ (resp. $A=\mathbb{O}$), then $\dim\z=3$ (resp. $\dim\z=7$), and $N^{1,1}$ and $N^{2,0}$ are isospectral. Moreover, they are not locally isometric, due to their corresponding 11-dimensional (resp. 23-dimensional) generalized Heisenberg algebras are not isomorphic.
	\end{remark}
	
	Now, we want to study the isospectrality in the non-compact sense between the Riemannian coverings $N(p,q)$ and $N(p+q,0)$. Szabó \cite{Sz.99}, proved their isospectrality in the case when $A=\mathbb{H}$, by constructing the explicit unitary operator which intertwines their Laplacians. The purpose of the rest of the section is to prove their isospectrality also when $A=\mathbb{O}$.
	
	\begin{proposition}\label{prop:isospec-noncompact}
		Let consider $A=\mathbb{O}$. Then, $N(p+q,0)$ and $N(p,q)$, $p\geq0,q\geq0,p+q\geq1$, are isospectral in the non-compact sense. Equivalently, there exists an operator, $T:L^2(N(p,q))\to L^2(N(p+q,0))$, intertwining the Laplacians $\Delta^{(p,q)}$ and $\Delta^{(p+q,0)}$, this is
		\begin{equation}\label{eq:int-lap}
			T\circ \Delta^{(p,q)}=\Delta^{(p+q,0)}\circ T.
		\end{equation}
		
	\end{proposition}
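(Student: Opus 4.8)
\section*{Proof proposal}

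The plan is to follow the blueprint that Szabó used for $A=\mathbb{H}$ in \cite{Sz.99}, adapting it to the octonionic setting. The starting point is to identify $L^2(N(p,q))$ and $L^2(N(p+q,0))$ with $L^2(\vgot\oplus\z)$ through the exponential coordinates of the first kind, which is legitimate since $\exp$ is a diffeomorphism for simply-connected 2-step nilpotent Lie groups. In these coordinates I would write down the left-invariant orthonormal frame obtained from an orthonormal basis of $\n=\vgot\oplus\z$, using the Campbell--Baker--Hausdorff formula to compute the left translations, and then express the Laplace--Beltrami operator $\Delta^{(p,q)}$ as the sum of squares of these fields. The crucial structural remark is that the resulting differential operator depends on the underlying algebra only through the map $j^{(p,q)}$, equivalently through the bracket, which is bilinear and determined by $j$ via \eqref{eq:relacionfundamenta-j-corchete}; no triple product of octonions enters the expression, so the difficulties coming from non-associativity do not appear at this stage.

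Next I would perform a partial Fourier transform in the central variable $Z\in\z$. For each frequency $\lambda\in\z\setminus\{0\}$ this turns $\Delta^{(p,q)}$ into a family of operators $\Delta^{(p,q)}_\lambda$ acting on $L^2(\vgot)$, each of which is a twisted harmonic-oscillator-type operator whose coefficients are governed by $j^{(p,q)}_\lambda$; the fibre over $\lambda=0$ reduces to the flat Laplacian of $\vgot$, which is literally the same for both groups. The whole problem then reduces to producing, for each $\lambda\neq0$, a unitary intertwiner between $\Delta^{(p,q)}_\lambda$ and $\Delta^{(p+q,0)}_\lambda$, measurably in $\lambda$, and to checking that assembling these fibrewise maps and inverting the Fourier transform yields the desired $T$ satisfying \eqref{eq:int-lap}.

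The key algebraic input is that both maps are of Heisenberg type, so that by \eqref{j2-relation} one has $\left(j^{(p,q)}_\lambda\right)^2=\left(j^{(p+q,0)}_\lambda\right)^2=-\|\lambda\|^2\textup{Id}_{\vgot}$ for every $\lambda$. Hence each $j_\lambda$ is, up to the scalar $\|\lambda\|$, an orthogonal complex structure on $\vgot$, with purely imaginary eigenvalues of modulus $\|\lambda\|$, each of multiplicity $\tfrac12\dim\vgot$; since $\dim\vgot$ is the same for $N(p,q)$ and $N(p+q,0)$ whenever $p+q$ is fixed, the two skew-symmetric endomorphisms are orthogonally conjugate. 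Choosing $O_\lambda\in O(\vgot)$ with $O_\lambda\, j^{(p,q)}_\lambda\, O_\lambda^{-1}=j^{(p+q,0)}_\lambda$ and using the induced change of variables $f\mapsto f\circ O_\lambda^{-1}$ on $L^2(\vgot)$ should produce the required fibrewise unitary intertwiner, because the oscillator operator is built from the $O(\vgot)$-invariant flat Laplacian together with terms transforming covariantly under conjugation of $j_\lambda$.

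I expect the main obstacle to be twofold. First, Szabó's original argument for $\mathbb{H}$ exploits the associativity of the quaternions when writing his intertwining operator explicitly, so I would need either to recast that construction so that it survives non-associativity, or to replace it by the conjugacy argument sketched above, which relies only on \eqref{j2-relation} and on matching dimensions. Second, and more concretely, I must ensure that the orthogonal conjugations $O_\lambda$ can be chosen to depend measurably on $\lambda$ over all of $\z\setminus\{0\}$, so that the fibrewise unitaries glue into a bounded operator on $L^2$; controlling this dependence, with due care for the octonionic multiplication and for the behaviour as $\lambda\to0$, is the step I would write out in full detail.
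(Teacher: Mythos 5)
Your proposal is correct in outline, but its key mechanism is genuinely different from the paper's. Both arguments reduce the problem by Fourier analysis in the central variable, after which the two Laplacians differ only in the first-order term $\left(j_{\lambda}X\right)\bullet$; the paper, however, performs this reduction through the quotient $N(p,q)/\Gamma\cong\mathbb{R}^{8(p+q)}\times\mathbb{T}^7$, decomposing $L^2_{\mathbb{C}}$ into countably many subspaces indexed by frequencies $\alpha\in\mathbb{Z}^7$ and pulling the intertwiner back to the cover via the transform $\mathcal{F}$, whereas you work directly with the continuous direct-integral decomposition over $\lambda\in\z$. The essential divergence is how the terms are matched: the paper constructs an explicit map $\sigma^{\alpha}_{(p,q)}$ on $\mathbb{O}^{p+q}$ (conjugation composed with right multiplication by a unit pure octonion $\nu^{\alpha}$ orthogonal to $Z^{\alpha}$) which converts right octonion multiplication into left multiplication, giving $\sigma^{\alpha}_{(p,q)}\circ j^{(p,q)}_{\iota(Z^{\alpha})}=j^{(p+q,0)}_{\iota(Z^{\alpha})}\circ\sigma^{\alpha}_{(p,q)}$, while you invoke the abstract fact that, by \eqref{j2-relation}, $j^{(p,q)}_{\lambda}$ and $j^{(p+q,0)}_{\lambda}$ are skew-symmetric with the same square $-\|\lambda\|^2\textup{Id}_{\vgot}$, hence orthogonally conjugate, and then conjugate variables fibrewise. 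Your covariance claim is sound: the flat Laplacian and the potential terms depending only on $\|X\|$ and $\|\lambda\|$ are $O(\vgot)$-invariant, and the derivation term satisfies $\left(j_{\lambda}X\right)\bullet(f\circ O_{\lambda}^{-1})=\left(\left(O_{\lambda}^{-1}j'_{\lambda}O_{\lambda}X\right)\bullet f\right)\circ O_{\lambda}^{-1}$, so the condition $O_{\lambda}\,j^{(p,q)}_{\lambda}\,O_{\lambda}^{-1}=j^{(p+q,0)}_{\lambda}$ is exactly what is needed. What your route buys is generality: it uses nothing about octonions, so it applies verbatim to any two Heisenberg-type groups with equal $(\dim\vgot,\dim\z)$, recovering Szab\'o's quaternionic case too. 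What it costs is the measurable-selection issue you flag, which is real --- indeed no \emph{continuous} choice of $\nu$ orthogonal to $Z$ exists on all of the unit sphere $S^6\subset\z$, by the hairy ball theorem, so some discontinuity is unavoidable --- but it closes easily: by linearity $j_{\lambda}=\|\lambda\|\,j_{\lambda/\|\lambda\|}$, so $O_{\lambda}$ may be chosen depending only on $\lambda/\|\lambda\|$, and a measurable choice on $S^6$ exists either by a standard measurable selection theorem applied to the closed, compact-fibred set $\left\{(u,O):O\,j^{(p,q)}_{u}\,O^{-1}=j^{(p+q,0)}_{u}\right\}\subset S^6\times O(\vgot)$, or by using the paper's explicit $\sigma$ piecewise on finitely many measurable pieces of $S^6$; unitarity gives the uniform bound needed to assemble the direct integral, and $\lambda=0$ is a null set. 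The paper's discrete route through the torus quotient sidesteps the selection problem entirely (only countably many $Z^{\alpha}$ occur), at the price of the extra step relating $\tilde{\Delta}^{(p,q)}$ on the quotient to $\Delta^{(p,q)}$ on the cover.
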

	
	\begin{proof}
		The generalized Heisenberg groups with 7-dimensional center $N(p,q)$ and $N(p+q,0)$ are diffeomorphic to $\mathbb{R}^{8(p+q)}\times\mathbb{R}^7$. Then, it is enough to prove the existence of an operator $T:L^2(\mathbb{R}^{8(p+q)}\times\mathbb{R}^7)\to L^2(\mathbb{R}^{8(p+q)}\times\mathbb{R}^7)$ satisfying \eqref{eq:int-lap}. From now on, we write $L^2(N(p,q))$ or $L^2(N(p+q,0))$ instead of $L^2(\mathbb{R}^{8(p+q)}\times\mathbb{R}^7)$ to distinguish the space of functions where the operator is working on. In order to do that, we first study the structure of $\Delta^{(p,q)}$.
		
		Let consider the following lattice
		$$
		\mathcal{L}=\left\{\sum_{i=1}^7\alpha_i\mathbf{e}_i:\alpha=(\alpha_1,\dots,\alpha_7)\in\mathbb{Z}^7\right\},
		$$
		where $\{\mathbf{e}_i\}_{i=1}^7$ is the standard basis of $\mathbb{R}^7$. The quotient $\mathbb{R}^{8(p+q)}\times(\mathbb{R}^7/\mathcal{L})$ can be seeing as $\mathbb{R}^{8(p+q)}\times\mathbb{T}^7$, where $\mathbb{T}^7$ is the 7-dimensional torus whose volume is 1. Thus, $\mathbb{R}^{8(p+q)}\times\mathbb{T}^7$ is diffeomorphic to $N(p,q)/\Gamma$, where $\Gamma=\exp(\mathcal{L})$. We denote by $\tilde{\Delta}^{(p,q)}$ the Laplace-Beltrami operator associated with the Riemannian manifold $N(p,q)/\Gamma$.
		
		As before, we write $L^2(N(p,q)/\Gamma)$ or $L^2(N(p+q,0)/\Gamma)$ instead of $L^2(\mathbb{R}^{8(p+q)}\times\mathbb{T}^7)$ in order to understand where the operators are acting. To relate $\Delta^{(p,q)}$ with $\tilde{\Delta}^{(p,q)}$, we need to establish a relation between functions in $L^2(N(p,q)/\Gamma)$ and functions in $L^2(N(p,q))$. We shall use the Fourier transform given by
		$$
		\mathcal{F}(f)(X,Z)=\int f(X,\xi)e^{-i\langle\xi,Z\rangle}d\xi,\quad X\in\mathbb{R}^{8(p+q)},\; Z\in\mathbb{R}^{7},\; \xi\in \mathbb{T}^{7},
		$$
		where $f\in(L^1\cap L^2)(N(p,q)/\Gamma)$. It is well defined almost everywhere and it has a unique extension onto $L^2(N(p,q)/\Gamma)$. Indeed, for each $f\in L^2(N(p,q)/\Gamma)$, it is satisfied
		\begin{equation}\label{eq:relacionlaplacianosfourier}
			\Delta^{(p,q)}\mathcal{F}(f)=\mathcal{F}(\tilde{\Delta}^{(p,q)}f).
		\end{equation}
		Therefore, we only need to study the structure of $\tilde{\Delta}^{(p,q)}$. Let fix an element $Z^\alpha=\sum_{i=1}^7\alpha_i\mathbf{e}_i\in\mathcal{L}$, with $\alpha=(\alpha_1,\dots,\alpha_7)\in\mathbb{Z}^7$, then, the function
		$$
		E^\alpha(Z)=e^{2\pi i\cdot\langle Z^\alpha,Z\rangle},\ Z\in\mathbb{R}^7,
		$$
		is periodic in $\mathcal{L}$. Let define the functions in $L^2_\mathbb{C}(N(p,q))$ by
		\begin{equation}\label{eq:funcionesgeneradorasLc}
			\Phi^\alpha(X,Z)=\varphi(X)E^\alpha(Z)
		\end{equation}
		with $\varphi$ in the complex function space $L^2_\mathbb{C}(\mathbb{R}^{8(p+q)})$. In addition, they are also well defined on $N(p,q)/\Gamma$. The complex function space of $N(p,q)/\Gamma$ can be decomposed in the following way
		\begin{equation}\label{eq:suma-subespacios}
			L^2_\mathbb{C}(N(p,q)/\Gamma)=\bigoplus_{\alpha\in\mathbb{Z}^7}L^2_{\mathbb{C}^\alpha}(N(p,q)/\Gamma),
		\end{equation}
		where $L^2_{\mathbb{C}^\alpha}(N(p,q)/\Gamma)$ is the function space spanned by the functions of the form (\ref{eq:funcionesgeneradorasLc}). Moreover, the Laplace-Beltrami operator acts separately on each subspace $L^2_{\mathbb{C}^\alpha}(N(p,q)/\Gamma)$, and they are invariant by its action, thus, one can consider the eigenvalue problem separately on each subspace. Szabó in \cite{Sz.99} computed the Laplace-Beltrami operator on these subspaces, it is given by \footnote{Note that the equations \eqref{eq:laplaciano-alpha} and \eqref{eq:laplaciano} differ from the ones presented in \cite{Sz.99} in the coefficient $\frac{\dim\mathfrak{v}}{4}$.} \small
		\begin{equation}\label{eq:laplaciano-alpha}
			(\tilde{\Delta}_\alpha^{(p,q)}\Phi^\alpha)(X,Z)=\left(\Delta_\mathfrak{v}+2\pi i\cdot \left(j^{(p,q)}_{Z^\alpha}X\right)\bullet-4\pi^2\|Z^\alpha\|^2\cdot\left(1+\frac{\dim\mathfrak{v}}{4}\|X\|^2\right)\right)(\varphi(X))E^\alpha(Z),
		\end{equation}\normalsize
		where $\Delta_\mathfrak{v}$ is the Laplace-Beltrami operator in the Euclidean space $\mathfrak{v}\cong\mathbb{R}^{8(p+q)}$, and $\left(j^{(p,q)}_{Z}X\right)\bullet\varphi$ acts as a derivation for every $\varphi\in L^2_\mathbb{C}(\mathbb{R}^{8(p+q)})$, $X\in\mathfrak{v}$, $Z\in\mathfrak{z}\cong\mathbb{R}^{7}$, with respect to the vector field $j^{(p,q)}_{Z}X$, this is
		$$
		\left(j^{(p,q)}_{Z}X\right)\bullet \varphi(X)=\sum_{i=1}^{8(p+q)}\frac{\partial \varphi}{\partial x_i}(X)\cdot \metrica{j^{(p,q)}_{Z}X,\mathbf{u}_i},
		$$
		where $\{x_i\}_{i=1}^{8(p+q)}$ are the coordinate functions on $\mathfrak{v}$, $\{\mathbf{u}_i\}_{i=1}^{8(p+q)}$ is the standard basis of $\mathfrak{v}$, and $g$ is the inner product defined on $\mathfrak{n}(p,q)$ with 7-dimensional center. This derivation is also well defined on functions $f\in L^2_{\mathbb{C}^\alpha}(N(p,q)/\Gamma)$, by
		$$
		\left(j^{(p,q)}_{Z}X\right)\bullet f(X,V)=\sum_{i=1}^{8(p+q)}\frac{\partial f}{\partial x_i}(X,V)\cdot \metrica{j^{(p,q)}_{Z}X,\mathbf{u}_i}.
		$$
		We denote both of them with the same notation if there is not risk of confusion.
		Thus, using the orthogonal decomposition of $L^2_{\mathbb{C}^\alpha}(N(p,q)/\Gamma)$ given in \eqref{eq:suma-subespacios} and the expresion of the Laplace-Beltrami given on each subspace by \eqref{eq:laplaciano-alpha}, Szabó \cite{Sz.99} established the Laplace-Beltrami operator associated with $N(p,q)/\Gamma$
		\begin{equation}\label{eq:laplaciano}
			\tilde{\Delta}^{(p,q)}f(X,\xi)=\left(\Delta_\mathfrak{v}- i\cdot \left(j^{(p,q)}_{\xi}X\right)\bullet -\left(1+\frac{\dim\mathfrak{v}}{4}\|X\|^2\right)\|\xi\|^2\right)f(X,\xi),
		\end{equation}
		where $f$ is an arbitrary function in $L^2_{\mathbb{C}}(N(p,q)/\Gamma)$.
		
		In order to find $T$ satisfying \eqref{eq:int-lap}, it is important to note that $\tilde{\Delta}^{(p,q)}$ and $\tilde{\Delta}^{(p+q,0)}$ given by \eqref{eq:laplaciano} only differ in the term depending on the $j$ map, $\left(j^{(p,q)}_{\xi}(X)\right)\bullet$ and $\left(j^{(p+q,0)}_{\xi}(X)\right)\bullet$. Moreover, due to \eqref{eq:suma-subespacios} and \eqref{eq:laplaciano-alpha}, it is enough to obtain an operator intertwining $\left(j^{(p,q)}_{Z^\alpha}(X)\right)\bullet$ and $\left(j^{(p+q,0)}_{Z^\alpha}X\right)\bullet$ on each $L^2_{\mathbb{C}^\alpha}(N(p,q)/\Gamma)$.
		
		Let consider $\iota:\mathbb{R}^7\to\mathbb{O}^*$ the natural embedding of $\mathbb{R}^7$ into the pure octonion space, $\bar{X}$ is the conjugate of $X\in\mathbb{O}$, and $Z^\alpha$ a fixed unitary vector in $\mathcal{L}$. Let take $\nu^\alpha\in\mathbb{R}^7$ an unit vector orthogonal to $Z^\alpha$, this means that $\|\nu^\alpha\|=1$ and $\langle\nu^\alpha,Z^\alpha\rangle=0$, with the usual scalar product in $\mathbb{R}^7$, equivalently $\textup{Re}(\iota(\nu^\alpha)\cdot\iota(Z^\alpha))=0$. Thus, it is important to note that $\iota(\nu^\alpha)\cdot\iota(Z^\alpha)$ remains inside $\mathbb{O}^*$, and $\bar{X}=-X$, for every $X\in\mathbb{O}^*$. Additionally, for any two orthogonal pure octonions, $X$ and $Y$, then $X\cdot Y=-Y\cdot X$. With that setting, we can define
		$$
		\sigma_{(p,q)}^\alpha:\mathbb{O}^{p+q}\longrightarrow\mathbb{O}^{p+q},
		$$
		by
		\begin{equation}\label{eq:operadorcambioj}
			\small
			\sigma_{(p,q)}^\alpha(X_1,\dots,X_p,X_{p+1},\dots,X_{p+q}):=(X_1,\dots,X_p,\overline{X_{p+1}\cdot\iota(\nu^\alpha)},\dots,\overline{X_{p+q}\cdot\iota(\nu^\alpha)}).
		\end{equation}
		Note that
		$$
		\begin{aligned}
			\overline{Y\cdot\iota(Z^\alpha)\cdot\iota(\nu^\alpha)}&=\iota(\nu^\alpha)\cdot\iota(Z^\alpha)\cdot\overline{Y} \\
			&=\iota(Z^\alpha)\cdot(-\iota(\nu^\alpha))\cdot\overline{Y} \\
			&=\iota(Z^\alpha)\cdot\overline{Y\cdot\iota(\nu^\alpha)},
		\end{aligned}
		$$
		for every $Y\in\mathbb{O}$. Thus, for some $X\in\mathbb{O}^{p+q}$,
		$$\small
		\begin{aligned}
			\sigma_{(p,q)}^\alpha&(j^{(p,q)}_{\iota(Z^\alpha)}(X))=\sigma_{(p,q)}^\alpha(\iota(Z^\alpha)\cdot X_1,\dots,\iota(Z^\alpha)\cdot X_p,X_{p+1}\cdot \iota(Z^\alpha),\dots,X_{p+q}\cdot \iota(Z^\alpha)) \\
			&=(\iota(Z^\alpha)\cdot X_1,\dots,\iota(Z^\alpha)\cdot X_p,\overline{X_{p+1}\cdot\iota(Z^\alpha)\cdot\iota(\nu^\alpha)},\dots,\overline{X_{p+q}\cdot\iota(Z^\alpha)\cdot\iota(\nu^\alpha)}) \\
			&=(\iota(Z^\alpha)\cdot X_1,\dots,\iota(Z^\alpha)\cdot X_p,\iota(Z^\alpha)\cdot\overline{X_{p+1}\cdot\iota(\nu^\alpha)},\dots,\iota(Z^\alpha)\cdot\overline{X_{p+q}\cdot\iota(\nu^\alpha)}) \\
			&=j^{(p+q,0)}_{\iota(Z^\alpha)}(\sigma_{(p,q)}^\alpha(X)).
		\end{aligned}
		$$
		This is
		$$
		\sigma_{(p,q)}^\alpha\circ j^{(p,q)}_{\iota(Z^\alpha)}=j^{(p+q,0)}_{\iota(Z^\alpha)}\circ\sigma_{(p,q)}^\alpha.
		$$
		Therefore, the induced map
		$$
		\sigma_{(p,q)*}^\alpha:L^2_{\mathbb{C}^\alpha}(N(p,q)/\Gamma)\to L^2_{\mathbb{C}^\alpha}(N(p+q,0)/\Gamma)
		$$
		is defined, for each $X_1,\dots,X_{p+q}\in\mathbb{O}$ and $Z\in\mathbb{T}^7$, by
		$$
		\sigma_{(p,q)*}^\alpha f(X_1,\dots,X_{p+q},Z):=f(\sigma_{(p,q)}^\alpha(X_1,\dots,X_{p+q}),Z).
		$$
		Note that $\sigma_{(p,q)*}^\alpha (f\cdot g)=\sigma_{(p,q)*}^\alpha f \cdot \sigma_{(p,q)*}^\alpha g$, for every $f,g\in L^2_{\mathbb{C}^\alpha}(N(p,q)/\Gamma)$. Then, $\sigma_{(p,q)*}^\alpha$ intertwines $\left(j^{(p,q)}_{\iota(Z^\alpha)}X\right)\bullet$ and $\left(j^{(p+q,0)}_{\iota(Z^\alpha)}X\right)\bullet$, this is
		$$
		\begin{aligned}
			\left(j^{(p+q,0)}_{\iota(Z^\alpha)}X\right)\bullet \sigma_{(p,q)*}^\alpha (f(X,Z))&=\left(j^{(p+q,0)}_{\iota(Z^\alpha)}X\right)\bullet \left(\sigma_{(p,q)*}^\alpha f(X,Z)\right)\\
			&=\left(j^{(p+q,0)}_{\iota(Z^\alpha)}X\right)\bullet f(\sigma_{(p,q)}^\alpha (X),Z))\\
			&=\frac{\partial f}{\partial X}(\sigma_{(p,q)}^\alpha (X),Z)\cdot f \left(j^{(p+q,0)}_{\iota(Z^\alpha)}\left(\sigma_{(p,q)}^\alpha (X),Z\right)\right)\\
			&=\sigma_{(p,q)*}^\alpha\left(\frac{\partial f}{\partial X}(X,Z)\right)\cdot f\left(\sigma_{(p,q)}^\alpha \left(j^{(p,q)}_{\iota(Z^\alpha)}X\right),Z\right)\\
			&=\sigma_{(p,q)*}^\alpha\left(\frac{\partial f}{\partial X}(X,Z)\right)\cdot \sigma_{(p,q)*}^\alpha f\left(j^{(p,q)}_{\iota(Z^\alpha)}X,Z\right)\\
			&=\sigma_{(p,q)*}^\alpha\left(\frac{\partial f}{\partial X}(X,Z)\cdot f\left(j^{(p,q)}_{\iota(Z^\alpha)}X,Z\right)\right)\\
			&=\sigma_{(p,q)*}^\alpha\left(\left(j^{(p,q)}_{\iota(Z^\alpha)}X\right)\bullet f(X,Z)\right),
		\end{aligned}
		$$
		for every  $f(X,Z)\in L^2_{\mathbb{C}^\alpha}(N(p,q)/\Gamma)$.

		Now, due to $\sigma_{(p,q)*}^\alpha$ acts separately for each $\alpha$, it leaves each subspace $L^2_{\mathbb{C}^\alpha}(N(p,q)/\Gamma)$ invariant. Thus, we can define
		$$
		\sigma_{(p,q)*}:=\bigoplus_\alpha\sigma_{(p,q)*}^\alpha:L^2_{\mathbb{C}}(N(p,q)/\Gamma)\to L^2_{\mathbb{C}}(N(p+q,0)/\Gamma),
		$$
		such that
		$$
		\sigma_{(p,q)*}\circ \left(j^{(p,q)}_{\xi}(X)\right)\bullet=\left(j^{(p+q,0)}_{\xi}(X)\right)\bullet\circ\sigma_{(p,q)*}.
		$$
		Therefore, $\sigma_{(p,q)*}$ intertwines the Laplacians of $N(p,q)/\Gamma$ and $N(p+q,0)/\Gamma$, this is
		$$
		\sigma_{(p,q)*}\circ \tilde{\Delta}^{(p,q)}=\tilde{\Delta}^{(p+q,0)}\circ\sigma_{(p,q)*}.
		$$
		
		Let consider $T:L^2(N(p,q))\to L^2(N(p+q,0))$ given by
		$$
		T=\mathcal{F}\circ \sigma_{(p,q)*}\circ \mathcal{F}^{-1}.
		$$
		Then, for each $f\in L^2(N(p,q)/\Gamma)$, it is satisfied
		$$
		\begin{aligned}
			T\circ \Delta^{(p,q)}(\mathcal{F}(f))&=\mathcal{F}\circ \sigma_{(p,q)*}\circ\mathcal{F}^{-1}(\Delta^{(p,q)}(\mathcal{F}(f)))=\mathcal{F}\circ \sigma_{(p,q)*}\circ\mathcal{F}^{-1}(\mathcal{F}(\tilde{\Delta}^{(p,q)}f))\\
			&=\mathcal{F}\circ \sigma_{(p,q)*}(\tilde{\Delta}^{(p,q)}f)=\mathcal{F}(\tilde{\Delta}^{(p+q,0)}\circ \sigma_{(p,q)*}(f))\\
			&=\Delta^{(p+q,0)}(\mathcal{F}(\sigma_{(p,q)*}(f)))=\Delta^{(p+q,0)}(\mathcal{F}\circ\sigma_{(p,q)*}\circ\mathcal{F}^{-1}(\mathcal{F}(f)))\\
			&=\Delta^{(p+q,0)}(\mathcal{F}\circ\sigma_{(p,q)*}\circ\mathcal{F}^{-1})(\mathcal{F}(f))=\Delta^{(p+q,0)}\circ T(\mathcal{F}(f)).
		\end{aligned}
		$$
		Equivalently,
		$$
		T\circ \Delta^{(p,q)}=\Delta^{(p+q,0)}\circ T.
		$$
	\end{proof}
	
	\section{Main results and conclusions}\label{sec:final}
	
	Generalized Heisenberg groups provide examples of Riemannian manifolds with different geometric properties depending on their dimension.
	
	Let $(M,g)$ be a Riemannian manifold, $\nabla$ its Levi--Civita connection and the curvature tensor of $M$,
	$$
	R(X,Y)=[\nabla_X,\nabla_Y]-\nabla_{[X,Y]},\quad X,Y\in\mathfrak{X}(M).
	$$
	Fixing a point $p\in M$ and a vector $v\in T_pM$, let denote by $\gamma_v$ the geodesic starting in $p$ with initial direction $v$. Then, a \textit{local geodesic symmetry} is the diffeomorphism $s_p$ such that it fixes the point $p$ and involves the vector of the geodesics in a neighborhood of the point, this is $s_p\circ\gamma_v=\gamma_{-v}$.
	
	A Riemannian manifold is said to be \textit{locally symmetric} if the local geodesic symmetries are in addition isometries. Moreover, Selberg in \cite{S.56} introduced the notion of \textit{weak symmetry}. These spaces are those on which any two points can be interchanged by an isometry. Berndt, Prüfer and Vanhecke proved in \cite{BPV.95} that weak symmetry is equivalent to \textit{ray symmetry} introduced by Szabó in \cite{Sz.93}.
	
	Selberg proved in \cite{S.56} that the algebra of all invariant differential operators of a weakly symmetric space is commutative. This fact points out a new definition, a \textit{commutative} space is a Riemannian space whose algebra of all invariant differential operators are commutative. Now, a natural question arises: \textit{Is any commutative Riemannian manifold a weakly symmetric space?} This question was answered negatively by Lauret in \cite{L.98.1}, where he found examples of commutative spaces which are not weakly symmetric.
	
	Berndt, Ricci and Vanhecke in \cite{BRV.98} distinguished between weakly symmetric spaces in the \textit{broad sense}, this is the original definition of Selberg, and weakly symmetric spaces in the \textit{narrow sense}, where the two points interchanging isometries must belong to the identity component of the group of isometries.
	
	Weak symmetry and commutativity were studied in generalized Heisenberg groups by Bernd, Ricci and Vanhecke in \cite{BRV.98}, and by Ricci in \cite{R.85}, respectively. These authors classified the generalized Heisenberg groups satisfying these properties in terms of their dimension.
	
	\begin{theorem}[Ricci, \cite{R.85}]\label{theo:clasificacionCommutative}
		A generalized Heisenberg group is commutative if and only if
		\begin{enumerate}
			\item[1.] $\dim\mathfrak{z}=1,2$ or 3,
			\item[2.] $\dim\mathfrak{z}=5$ and $\dim\mathfrak{v}=8$,
			\item[3.] $\dim\mathfrak{z}=6$ and $\dim\mathfrak{v}=8$,
			\item[4.] $\dim\mathfrak{z}=7$ and:
			\begin{enumerate}
				\item[i)] $\dim\mathfrak{v}=8$,
				\item[ii)] $\dim\mathfrak{v}=16$, with $\mathfrak{v}$ isotypic, this is $\n=\n_{2,0}\cong\n_{0,2}$.
			\end{enumerate} 
		\end{enumerate}
	\end{theorem}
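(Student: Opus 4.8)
The plan is to recast commutativity as a Gelfand-pair condition and then reduce the whole statement to a finite, dimension-by-dimension verification controlled by the Clifford classification recalled above. First I would use the standard fact that, for a Riemannian homogeneous space, the algebra of invariant differential operators is commutative if and only if $(\mathrm{Isom}(N(j))^0, K)$ is a Gelfand pair, where $K$ is the isotropy subgroup at the identity; equivalently, the convolution algebra $L^1(N(j))^K$ of $K$-invariant integrable functions is commutative. So the first task is to pin down $\mathrm{Isom}(N(j))$ and its isotropy group. For an H-type group the connected isometry group splits as $N(j)\rtimes K$, and since $\mathfrak{z}=[\mathfrak{n},\mathfrak{n}]$ is characteristic and $\mathfrak{v}=\mathfrak{z}^{\perp}$, the group $K$ is exactly the group of orthogonal automorphisms of the metric Lie algebra,
\[
K=\{(\phi,\psi)\in O(\mathfrak{v})\times O(\mathfrak{z}) : \phi\, j_Z\, \phi^{-1}=j_{\psi Z}\ \text{for every } Z\in\mathfrak{z}\},
\]
so that computing $K$ amounts to reading off the symmetries of the underlying Clifford-module structure of $\mathfrak{v}$.

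Second, I would invoke the orbit/multiplicity-free criterion for Gelfand pairs attached to $2$-step nilpotent groups (in the clean form due to Benson--Jenkins--Ratcliff): $(N(j)\rtimes K,K)$ is a Gelfand pair if and only if, for a generic $Z\in\mathfrak{z}$ (identifying $\mathfrak{z}\cong\mathfrak{z}^{*}$ through $g$), the stabilizer $K_Z\le K$ acts in a multiplicity-free way on the polynomial algebra built from $\mathfrak{v}$. Here the Heisenberg condition \eqref{j2-relation} is what makes this tractable: since $j_Z^2=-\|Z\|^2\,\mathrm{Id}_{\mathfrak v}$, the operator $J_Z=j_Z/\|Z\|$ is a compatible complex structure, so $(\mathfrak{v},J_Z)$ is Hermitian, $K_Z$ acts unitarily, and the criterion becomes multiplicity-freeness of the $K_Z$-representation on the holomorphic polynomials $S^{\bullet}(\mathfrak{v},J_Z)$. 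This is the condition I would check in each case.

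Third comes the case analysis. For each admissible pair $(\dim\mathfrak{z},\dim\mathfrak{v})$ produced by the two-branch Clifford classification, I would compute $K$, then the generic stabilizer $K_Z$, and decide whether its action on polynomials is multiplicity-free. The expectation is that small centers ($\dim\mathfrak{z}=1,2,3$) force $K$ to be so large that the condition holds for every $\mathfrak{v}$, while the isolated entries ($\dim\mathfrak{z}=5,6,7$ with $\dim\mathfrak{v}=8$, and $\dim\mathfrak{z}=7$ with $\dim\mathfrak{v}=16$ isotypic) are precisely those residual dimensions where $K_Z$ remains transitive enough on the relevant spheres to annihilate all multiplicities; in every other admissible dimension a multiplicity must be exhibited to rule commutativity out.

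The step I expect to be the main obstacle is the borderline case $\dim\mathfrak{z}=7$, $\dim\mathfrak{v}=16$, where $\mathfrak{n}_{2,0}$ and $\mathfrak{n}_{1,1}$ have equal dimension but opposite answers, and on which the whole inaudibility conclusion ultimately rests. For the isotypic module $\mathfrak{v}\cong(\mathfrak{v}_0)^2$ one must show that a positive-dimensional group of intertwiners mixing the two identical copies of the irreducible module enlarges $K$ enough to make the $K_Z$-action on $S^{\bullet}(\mathfrak{v},J_Z)$ multiplicity-free; for the non-isotypic $\mathfrak{v}\cong\mathfrak{v}_1\oplus\mathfrak{v}_2$ the two inequivalent summands cannot be exchanged, so $K$ is strictly smaller and a genuine multiplicity survives. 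Establishing the exact structure of $K$ and $K_Z$ in these two subcases, and verifying that a repeated $K_Z$-type really does occur in the non-isotypic one, is the delicate computation that separates $N(2,0)$ from $N(1,1)$ and hence drives the inaudibility result.
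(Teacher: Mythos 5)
You should first note that the paper does not prove this statement at all: it is quoted verbatim as Ricci's classification \cite{R.85}, so your proposal can only be compared with the arguments in the literature, not with an in-paper proof. Measured that way, your general strategy is the right one and essentially the known one: commutativity of the algebra of invariant differential operators on $N(j)=\left(N(j)\rtimes K\right)/K$ is equivalent to $(N(j)\rtimes K,K)$ being a Gelfand pair, $K$ is the group of orthogonal automorphisms of the metric Lie algebra (this uses Wilson's/Kaplan's description of the isometry group of a nilpotent Lie group with left-invariant metric, which you should cite rather than assert), and the Gelfand condition for a $2$-step group reduces, via the Benson--Jenkins--Ratcliff criterion, to multiplicity-freeness of the $K_Z$-action on the holomorphic polynomials for the complex structure $J_Z=j_Z/\|Z\|$. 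One technical point you gloss over: the criterion must hold for \emph{every} nonzero $Z\in\z$, and reducing to a single ``generic'' $Z$ requires knowing that $K$ acts transitively on the unit sphere of $\z$, which for H-type algebras follows from the fact that $\textup{Spin}(\z)$ sits inside $K$; this deserves a proof, not an identification ``up to reading off symmetries.''

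The genuine gap is that everything carrying the actual content of the theorem is deferred rather than executed. Concretely: (i) the structure of $K$ and of the generic stabilizer $K_Z$ for each Clifford module (Riehm's computation) is assumed; (ii) none of the multiplicity-free verifications for the positive cases ($\dim\z\le 3$; $\dim\z=5,6$ with $\dim\vgot=8$; $\dim\z=7$ with $\dim\vgot=8$ or with $\dim\vgot=16$ isotypic) is carried out; (iii) the negative cases are \emph{not} finite in number --- the classification ranges over all $\dim\z$ and all multiples $\dim\vgot$ of the irreducible module dimension --- so your claim of a reduction to ``a finite, dimension-by-dimension verification'' is false as stated, and one needs a uniform argument (for instance a dimension count showing $K_Z$ is too small to act multiplicity-freely once $\dim\vgot$ or $\dim\z$ is large, or an appeal to the classification of multiplicity-free actions of Kac/Benson--Ratcliff/Leahy) to dispose of the infinitely many remaining pairs; and (iv) the decisive borderline case $\dim\z=7$, $\dim\vgot=16$, where $\n_{2,0}$ is commutative and $\n_{1,1}$ is not --- precisely the computation on which the paper's inaudibility results rest --- is explicitly labelled ``the main obstacle'' and left open, with no construction of the enlarged intertwiner group in the isotypic case nor exhibition of a repeated $K_Z$-type in the non-isotypic case. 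As it stands, the proposal is a sound program that reproduces the skeleton of Ricci's approach, but it proves none of the cases of the classification.
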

	
	\begin{theorem}[Berndt, Ricci, Vanhecke, \cite{BRV.98}]\label{theo:clasificacionWS}
		A generalized Heisenberg group is weakly symmetric in the broad sense if and only if it is commutative. Moreover, every weakly symmetric in the broad sense generalized Heisenberg group is weakly symmetric in the narrow sense except the case when $\dim\z=1$.
	\end{theorem}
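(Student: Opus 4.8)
The plan is to split the biconditional into its two implications and treat the broad-versus-narrow refinement afterwards. The implication that weak symmetry in the broad sense forces commutativity is not special to generalized Heisenberg groups at all: it is exactly Selberg's general theorem, recalled in the introduction, that the algebra of invariant differential operators on any weakly symmetric space is commutative. So the first thing I would do is invoke Selberg's result to dispose of this direction, thereby reducing the whole problem to showing that every \emph{commutative} generalized Heisenberg group is weakly symmetric in the broad sense.

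For the converse I would exploit the fact that, by Ricci's classification (Theorem \ref{theo:clasificacionCommutative}), the commutative generalized Heisenberg groups form a short explicit list indexed by $(\dim\z,\dim\vgot)$, so weak symmetry can be verified case by case on this list. Using homogeneity it suffices to interchange the identity $e$ with an arbitrary point $\exp(X+Z)$, with $X\in\vgot$ and $Z\in\z$, by an isometry; and since the full isometry group of a generalized Heisenberg group is the semidirect product of left translations with the isotropy group $K$ of orthogonal automorphisms $(\varphi,\psi)\in O(\vgot)\times O(\z)$ satisfying $\varphi\circ \apj{U}=\apj{\psi(U)}\circ\varphi$ for all $U\in\z$, the existence of such an interchanging isometry reduces to a purely algebraic condition on $K$: roughly, that every central direction can be reversed by an element of $K$ acting compatibly on $\vgot$. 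I would then check this condition directly for each entry of Ricci's list, reading off the required elements of $K$ from the underlying Clifford-module structure and, in the borderline cases $\dim\z=7$, from the octonionic description of $\apj{}$ used in Section \ref{sec:isospectrality}.

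For the final refinement I would track, throughout the case analysis, whether the interchanging isometries can be chosen inside the identity component $I_0(N)$ of the isometry group. The obstruction is concentrated on the centre: interchanging $e$ with a pure central point $\exp(Z)$ requires an element of $K$ reversing $Z$, and the question is whether that element lies in the identity component. When $\dim\z\geq 2$ the reversal of a single central direction can be completed to a rotation by $\pi$ in a $2$-plane of $\z$, hence realized within the connected structure group, so it sits in $I_0(N)$ and narrow-sense weak symmetry follows; when $\dim\z=1$ there is no second central direction to absorb the sign, the reversal is forced to be orientation-reversing on $\z$ and cannot be connected to the identity, which is exactly why the classical Heisenberg group is weakly symmetric only in the broad sense. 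I expect the main obstacle to be the higher-dimensional commutative cases — in particular $\dim\z=7$, $\dim\vgot=16$ with $\vgot$ isotypic — where verifying the algebraic condition on $K$ while simultaneously controlling its connected component requires a hands-on analysis of the octonionic $\apj{}$-maps rather than any soft argument.
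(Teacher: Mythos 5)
The first thing to note is that the paper does not prove this statement at all: Theorem \ref{theo:clasificacionWS} is quoted verbatim from Berndt--Ricci--Vanhecke \cite{BRV.98} and used as a black box, so the only meaningful comparison is with the argument of that cited source. Your overall architecture does reproduce it: Selberg's theorem disposes of the implication ``weakly symmetric $\Rightarrow$ commutative''; Ricci's classification (Theorem \ref{theo:clasificacionCommutative}) reduces the converse to finitely many cases; and Wilson's description of the isometry group of a simply connected $2$-step nilpotent Lie group as $K\ltimes N$, with $K$ the orthogonal automorphism pairs $(\varphi,\psi)$, turns ``interchange $e$ and $\exp(X+Z)$'' into an algebraic condition on $K$. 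That skeleton is sound and is indeed how the cited proof proceeds.

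The genuine gap is in what you take that algebraic condition to be. Writing an isometry as $L_a\circ k$ with $k\in K$ and $L_a$ a left translation, interchanging $e$ and $n=\exp(X+Z)$ forces $a=n$ and $k(n)=n^{-1}$, i.e.\ there must exist a \emph{single} pair $(\varphi,\psi)\in K$ with $\varphi(X)=-X$ \emph{and} $\psi(Z)=-Z$ simultaneously. Your ``rough'' criterion --- that every central direction can be reversed by some element of $K$ acting compatibly on $\vgot$ --- drops the condition on the $\vgot$-component, and this is not a harmless simplification: it is satisfied by groups that are not weakly symmetric. Concretely, in $N(1,1)$ with $\dim\z=7$, take $\varphi(X_1,X_2)=(\bar{X}_2,\bar{X}_1)$ and $\psi=-\textup{Id}_\z$; using $\bar{Z}=-Z$ for pure octonions and $\overline{A\cdot B}=\bar{B}\cdot\bar{A}$, one checks
$\varphi\bigl(\apj{Z}^{(1,1)}(X_1,X_2)\bigr)=\bigl(\overline{X_2\cdot Z},\overline{Z\cdot X_1}\bigr)=\bigl((-Z)\cdot\bar{X}_2,\bar{X}_1\cdot(-Z)\bigr)=\apj{\psi(Z)}^{(1,1)}\bigl(\varphi(X_1,X_2)\bigr)$,
so $(\varphi,\psi)\in K$ and every central direction of $N(1,1)$ is reversible in your sense. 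Yet $N(1,1)$ is precisely the paper's example of a non-commutative --- hence, by Selberg, non-weakly-symmetric --- generalized Heisenberg group, so your criterion would wrongly certify it. The entire difficulty, and the place where isotypicity of $\vgot$ enters, is reversing the $\vgot$-component at the same time as the central one; your narrow-sense discussion (``the obstruction is concentrated on the centre'') goes astray for the same reason, since for $\dim\z\geq2$ one must produce a pair in the identity component $K_0$ satisfying \emph{both} equations, and absorbing the sign on $\z$ by a rotation need not preserve $\varphi(X)=-X$. Finally, even granting the corrected criterion, the case-by-case verifications on Ricci's list and the determination of $K_0$ (which requires Riehm's computation of $K$) are announced but not carried out, so what you have is a correct skeleton of the cited proof rather than a proof.
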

	
	Following Kowalski and Vanhecke \cite{KV.91}, a \textit{g.o. space} is a Riemannian manifold $(M,g)$ whose geodesics are orbits of a one-parameter subgroup of the group of the isometries of $M$. Generalized Heisenberg groups which are g.o. spaces were studied and classified by Riehm in \cite{Rie.84}.
	
	\begin{theorem}[Riehm, \cite{Rie.84}]\label{theo:clasificaciongo}
		A generalized Heisenberg group is a Riemannian g.o. space if and only if
		\begin{enumerate}
			\item[1.] $\dim\mathfrak{z}=1,2$ or 3,
			\item[2.] $\dim\mathfrak{z}=5$ and $\dim\mathfrak{v}=8$,
			\item[3.] $\dim\mathfrak{z}=6$ and $\dim\mathfrak{v}=8$,
			\item[4.] $\dim\mathfrak{z}=7$ and:
			\begin{enumerate}
				\item[i)] $\dim\mathfrak{v}=8$,
				\item[ii)] $\dim\mathfrak{v}=16$, with $\mathfrak{v}$ isotypic, this is $\n=\n_{2,0}\cong\n_{0,2}$,
				\item[iii)] $\dim\mathfrak{v}=24$, with $\mathfrak{v}$ isotypic, this is $\n=\n_{3,0}\cong\n_{0,3}$.
			\end{enumerate} 
		\end{enumerate}
	\end{theorem}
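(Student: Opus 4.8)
The plan is to realise $N(j)$ as a Riemannian homogeneous space $G/K$, where $G$ is the identity component of the full isometry group and $K$ the isotropy at the identity, and then to convert the geodesic-orbit property into a purely algebraic condition on the metric Lie algebra via the geodesic lemma of Kowalski--Vanhecke \cite{KV.91}. By Wilson's description of the isometries of a simply-connected two-step nilmanifold \cite{W.82}, one has $G=N(j)\rtimes K$ with $K$ the group of orthogonal automorphisms of $\n$, so that its Lie algebra $\mathfrak{k}$ is the space of skew-symmetric derivations of $\n$. For a Heisenberg-type algebra these can be written as pairs $(A,B)$ with $A\in\mathfrak{so}(\vgot)$, $B\in\mathfrak{so}(\z)$ subject to the compatibility $[A,\apj{W}]=\apj{BW}$ for all $W\in\z$. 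Taking the reductive decomposition $\mathfrak{g}=\n\oplus\mathfrak{k}$ with $\mathfrak{m}=\n$, a vector $X+D$ (with $X\in\n$, $D\in\mathfrak{k}$) is a geodesic vector exactly when $\langle [X,Y]_{\n}+D(Y),\,X\rangle=0$ for every $Y\in\n$, and $N(j)$ is a g.o. space precisely when every $X\in\n$ admits such a $D\in\mathfrak{k}$.

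Next I would carry out this bracket computation explicitly. Writing $X=V+Z$ with $V\in\vgot$, $Z\in\z$, $D=(A,B)$, and $Y=V'+Z'$, the defining relation \eqref{eq:relacionfundamenta-j-corchete} gives $\langle [V,V'],Z\rangle=\langle \apj{Z}V,V'\rangle$, while the skew-symmetry of $A$ and $B$ turns $\langle D(Y),X\rangle$ into $-\langle V',AV\rangle-\langle Z',BZ\rangle$. Since $V'$ and $Z'$ vary independently, the geodesic-vector equation collapses into the two conditions $AV=\apj{Z}V$ and $BZ=0$. Thus $N(j)$ is a g.o. space if and only if for every $V\in\vgot$ and every $Z\in\z$ there is a skew-symmetric derivation $(A,B)$ of $\n$ with $BZ=0$ whose restriction to $\vgot$ sends $V$ to $\apj{Z}V$. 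This reduction is the conceptual core: it replaces the dynamical condition by the requirement that $\apj{Z}V$ lie in the isotropy-generated subspace $\{AV:(A,B)\in\mathfrak{k},\,BZ=0\}\subseteq\vgot$ for all $V$.

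The remaining, and principal, work is representation-theoretic. Fixing a unit $Z$, the operator $\apj{Z}$ is an orthogonal complex structure on $\vgot$ by \eqref{j2-relation}, and the admissible operators $A$ arise from two sources: the commutant of the Clifford action of $Cl(\z)$ on $\vgot$ (the derivations with $B=0$), and the spin lift of the stabiliser $\{B\in\mathfrak{so}(\z):BZ=0\}$. Using the classification of Clifford modules recalled in Section \ref{sec:isospectrality}, I would compute both contributions as functions of $\dim\z$ (governed by Bott periodicity) and of the multiplicity data $p,q$, and then decide, by a dimension count together with explicit choices of $V$, exactly when the span condition holds for every $V$. The expectation is that it holds for the small centres $\dim\z\le 3$ and for the sporadic pairs $(\dim\z,\dim\vgot)\in\{(5,8),(6,8),(7,8),(7,16),(7,24)\}$, and fails otherwise, with failure certified by producing a single $V$ whose image $\apj{Z}V$ escapes the isotropy-generated subspace.

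The hard part will be the borderline cases in $\dim\z=7$, where the admissible dimensions of $\vgot$ are the multiples of $8$ and the distinction between the isotypic algebras $\n_{p,0}$ and the mixed ones $\n_{p,q}$ with $pq\neq0$ becomes decisive. Here the positive answers for $\dim\vgot=16$ and $24$ rely on the exceptional multiplicative structure of the octonions (triality), which enlarges the space of available derivations just enough to cover every $\apj{Z}V$, whereas for the mixed algebras and for larger multiplicities the same count falls short. Pinning down this threshold precisely, and separating the isotypic from the non-isotypic situations, is where the delicate algebra lies and where the classification is genuinely earned.
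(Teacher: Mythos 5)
The paper itself offers no proof of this statement --- it is quoted, with attribution, from Riehm \cite{Rie.84} --- so your proposal must be judged as a self-contained argument, and as such it has a genuine gap. The part you do carry out is correct and is the standard route: by Wilson's theorem the isometry group of $(N(j),g)$ is $N(j)\rtimes K$ with $\mathfrak{k}$ the algebra of skew-symmetric derivations, these are pairs $(A,B)\in\mathfrak{so}(\vgot)\oplus\mathfrak{so}(\z)$ with $[A,\apj{W}]=\apj{BW}$ for all $W\in\z$, and the Kowalski--Vanhecke geodesic lemma applied to the reductive decomposition $\mathfrak{g}=\n\oplus\mathfrak{k}$ does collapse, exactly as you compute, to the criterion: $N(j)$ is a g.o.\ space if and only if for every $V\in\vgot$ and $Z\in\z$ there exists such a pair with $BZ=0$ and $AV=\apj{Z}V$. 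Your sign bookkeeping in that computation is right, and the reformulation is the one used in the literature.

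The gap is that this reformulation is where the theorem \emph{begins}, not where it ends, and you do not carry out the classification that constitutes its content. From ``I would compute both contributions\dots'' onward the proposal is a plan plus an asserted answer (``the expectation is that it holds for\dots''), not an argument. Nothing in it shows, for instance, that the criterion fails when $\dim\z=4$, or $\dim\z\geq 8$, or for the non-isotypic algebras $\n_{p,q}$ with $pq\neq 0$ and $\dim\z=7$; failure requires proving that \emph{no} admissible pair $(A,B)$ works for some explicit $V$, which demands knowing the full derivation algebra (commutant of the Clifford action plus the lift of the stabiliser $\{B\in\mathfrak{so}(\z):BZ=0\}$) in each case. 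Nor is the positive direction for $\n_{2,0}$ and $\n_{3,0}$ established: the appeal to ``triality'' is a heuristic label, not a verification that the extra derivations of the isotypic modules cover every $\apj{Z}V$, and the sporadic cases $(\dim\z,\dim\vgot)=(5,8),(6,8),(7,8)$ are not addressed at all beyond being listed. These dimension-by-dimension computations are precisely what Riehm's paper consists of; until they are done, what you have is a correct and useful reduction of the problem, not a proof of the classification.
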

	
	Note that, generalized Heisenberg groups provide an example of g.o. space which is not weakly symmetric nor commutative, namely the generalized Heisenberg group with $\dim\mathfrak{z}=7$, $\dim\mathfrak{v}=24$ and $\mathfrak{v}$ isotypic.
	
	In order to prove the main results, we combine Proposition \ref{prop:isospec-noncompact} with the classifications of generalized Heisenberg groups presented in Theorem \ref{theo:clasificacionCommutative}, Theorem \ref{theo:clasificaciongo}, and Theorem \ref{theo:clasificacionWS}.
	
	\begin{proposition}
		One cannot hear if a non-compact Riemannian manifold is commutative. 
	\end{proposition}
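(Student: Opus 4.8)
The plan is to exhibit an explicit pair of non-compact generalized Heisenberg groups that are isospectral but distinguished by commutativity, so that commutativity fails to be an acoustic invariant. The natural candidates come from the octonionic construction with $A=\mathbb{O}$, for which $\dim\z=7$, applied to the values $p=q=1$. This yields the two $23$-dimensional generalized Heisenberg groups $N(1,1)$ and $N(2,0)$, both having $\dim\vgot=8(p+q)=16$ and $\dim\z=7$.

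First I would invoke Proposition \ref{prop:isospec-noncompact} with $p=q=1$ to conclude that $N(2,0)=N(p+q,0)$ and $N(1,1)=N(p,q)$ are isospectral in the non-compact sense; the operator $T$ constructed there intertwines their Laplacians on $L^2$ of the full groups. Since these groups are diffeomorphic to $\mathbb{R}^{23}$, they are genuinely non-compact Riemannian manifolds, so the isospectrality is exactly of the type demanded by the statement.

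Next I would separate the two groups by commutativity using Ricci's classification in Theorem \ref{theo:clasificacionCommutative}. The group $N(2,0)$ has $\dim\z=7$, $\dim\vgot=16$, and is isotypic because $q=0$; hence it falls under case $4.ii)$ and is commutative. By contrast, $N(1,1)$ has the same dimensions but is \emph{not} isotypic, since $p=q=1$ with both nonzero. The only entry of Theorem \ref{theo:clasificacionCommutative} compatible with $\dim\z=7$ and $\dim\vgot=16$ is case $4.ii)$, which explicitly requires $\vgot$ isotypic; as this fails for $N(1,1)$, that group is not commutative.

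Putting these together yields the conclusion: $N(2,0)$ is commutative, $N(1,1)$ is not, yet the two are isospectral in the non-compact sense, so one cannot hear commutativity. The bulk of the work, namely the construction of the intertwining operator $T$, has already been carried out in Proposition \ref{prop:isospec-noncompact}, and the classification is cited as stated, so no serious analytic obstacle remains. The only point demanding care is the verification that $N(1,1)$ escapes every commutative case of Theorem \ref{theo:clasificacionCommutative}, which reduces to confirming that the sole dimensionally compatible case $4.ii)$ is barred precisely by non-isotypicity.
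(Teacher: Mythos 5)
Your proof is correct and follows essentially the same route as the paper: invoke Proposition \ref{prop:isospec-noncompact} for the non-compact isospectrality of the 23-dimensional pair $N(2,0)$ and $N(1,1)$ with 7-dimensional center, then apply Theorem \ref{theo:clasificacionCommutative} to see that $N(2,0)$ (isotypic, case 4.ii) is commutative while the non-isotypic $N(1,1)$ is not. Your extra care in checking that $N(1,1)$ escapes every case of the classification is a welcome elaboration of what the paper leaves implicit.
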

	\begin{proof}
		By Proposition \ref{prop:isospec-noncompact}, the 23-dimensional non-compact generalized Heisenberg groups $N(2,0)$ and $N(1,1)$ with 7-dimensional center are isospectral. The classification of generalized Heisenberg groups which are commutative spaces, Theorem \ref{theo:clasificacionCommutative}, establishes that $N(2,0)$ is a commutative space while $N(1,1)$ is not a commutative space.
	\end{proof}
	
	The next result was already proved by Gordon in \cite{G.96} using a result by Gordon and Wilson in \cite{GW.97}. Summarizing, its proof is as the previous one, but using Theorem \ref{theo:clasificaciongo}.
	
	\begin{proposition}\label{prop:inaudible-go}
		One cannot hear if a non-compact Riemannian manifold is a g.o. space.
	\end{proposition}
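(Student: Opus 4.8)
The plan is to follow verbatim the strategy of the preceding proposition, replacing the commutativity classification by Riehm's classification of g.o.\ generalized Heisenberg groups. First I would recall from Proposition \ref{prop:isospec-noncompact} (applied with $A=\mathbb{O}$ and $(p,q)=(1,1)$, so that $p+q=2$) that the two $23$-dimensional non-compact generalized Heisenberg groups $N(2,0)$ and $N(1,1)$, each with $\dim\vgot=16$ and $\dim\z=7$, are isospectral in the non-compact sense. This is precisely the isospectral pair already used in the commutative case, so no new isospectrality argument is needed.

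Next I would invoke Theorem \ref{theo:clasificaciongo} to separate the two manifolds geometrically. The group $N(2,0)$ has $\dim\z=7$, $\dim\vgot=16$, and $\vgot$ isotypic (since $q=0$), so it falls under case 4.ii and is therefore a Riemannian g.o.\ space. By contrast, $N(1,1)$ has exactly the same dimensions, but here $p=q=1$, so $\vgot$ is \emph{not} isotypic; consequently $N(1,1)$ matches none of the cases listed in Theorem \ref{theo:clasificaciongo} and hence is not a g.o.\ space. Since one member of an isospectral pair is a g.o.\ space and the other is not, the g.o.\ property is inaudible in the non-compact setting.

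I expect essentially no obstacle in this argument: all the analytic work is already carried out in Proposition \ref{prop:isospec-noncompact}, and the geometric distinction is delegated entirely to Theorem \ref{theo:clasificaciongo}. The only point deserving a moment of care is the bookkeeping of the isotypic condition, which is exactly what distinguishes g.o.\ from non-g.o.\ at $\dim\vgot=16$. Here one simply observes that Riehm's g.o.\ classification agrees with Ricci's commutative classification in Theorem \ref{theo:clasificacionCommutative} on the relevant pair---the two lists differ only at $\dim\vgot=24$, namely case 4.iii---so the very same witness $N(2,0)$, $N(1,1)$ that established inaudibility of commutativity serves here verbatim.
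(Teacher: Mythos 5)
Your proposal is correct and coincides with the paper's own argument: the paper states that the proof is exactly as in the commutative case (same isospectral pair $N(2,0)$, $N(1,1)$ with $\dim\z=7$, $\dim\vgot=16$ from Proposition \ref{prop:isospec-noncompact}) but invoking Theorem \ref{theo:clasificaciongo} in place of Theorem \ref{theo:clasificacionCommutative}, which is precisely what you do. Your closing observation that the two classifications differ only in case 4.iii is also accurate and matches the paper's remark that this case yields a g.o.\ space which is neither weakly symmetric nor commutative.
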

	
	Now, using again the same pair of isospectral manifolds together with Theorem \ref{theo:clasificacionWS}, we obtain the result about the weak symmetric property.  
	
	\begin{proposition}
		One cannot hear if a non-compact Riemannian manifold is weakly symmetric, neither in the broad sense nor in the narrow sense.
	\end{proposition}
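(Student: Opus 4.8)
The plan is to reuse the very same isospectral pair that settled the commutative case and to transfer the commutativity data into weak-symmetry data through Theorem \ref{theo:clasificacionWS}. First I would invoke Proposition \ref{prop:isospec-noncompact} to recall that the $23$-dimensional non-compact generalized Heisenberg groups $N(2,0)$ and $N(1,1)$ with $7$-dimensional center are isospectral. This is exactly the pair used in the previous proposition, so no new spectral input is required; the whole argument is a bookkeeping exercise on the classification theorems.

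Next I would convert the commutativity information into broad-sense weak symmetry. By Theorem \ref{theo:clasificacionCommutative}, $N(2,0)$ is commutative while $N(1,1)$ is not, and the \emph{if and only if} part of Theorem \ref{theo:clasificacionWS} states that a generalized Heisenberg group is weakly symmetric in the broad sense precisely when it is commutative. Hence $N(2,0)$ is weakly symmetric in the broad sense and $N(1,1)$ is not, which already establishes inaudibility in the broad sense.

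The remaining point is the narrow sense, where I would argue separately for the two groups and in opposite logical directions. For $N(2,0)$: since $\dim\z = 7 \neq 1$, the exceptional case in the \emph{moreover} clause of Theorem \ref{theo:clasificacionWS} does not occur, so broad-sense weak symmetry upgrades to narrow-sense weak symmetry, and $N(2,0)$ is weakly symmetric in the narrow sense. For $N(1,1)$: narrow-sense weak symmetry trivially implies broad-sense weak symmetry, because an isometry in the identity component is in particular an isometry; since $N(1,1)$ already fails the broad-sense property, it must fail the narrow-sense one as well. The same isospectral pair therefore witnesses inaudibility in both senses.

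The only delicate step is keeping the implications between the two senses pointing the right way: narrow implies broad, so the positive conclusion must be drawn for $N(2,0)$ (via the non-exceptional central dimension $\dim\z=7$) and the negative conclusion for $N(1,1)$ (via the contrapositive of that trivial implication). No computation is needed beyond importing the classification data from the cited theorems, so I expect no genuine obstacle, merely the care described above.
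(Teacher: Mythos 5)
Your proposal is correct and follows essentially the same route as the paper: the paper likewise reuses the isospectral pair $N(2,0)$ and $N(1,1)$ from Proposition \ref{prop:isospec-noncompact} and appeals to Theorem \ref{theo:clasificacionWS} to separate the two manifolds by weak symmetry. Your careful handling of the narrow sense (using $\dim\z=7\neq1$ for the positive side and the trivial implication narrow $\Rightarrow$ broad for the negative side) merely makes explicit what the paper leaves implicit.
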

	
	These three propositions point out the inaudibility of being a g.o. space, being a commutative space, and being a weakly symmetric space in the non-compact case using $N(2,0)$ and $N(1,1)$ with 7-dimensional center. However, we cannot guarantee if their compact quotients, $N^{2,0}$ and $N^{1,1}$, inherit the same global properties, and it is still open the audibility problem of these properties in the compact setting. We can only guarantee that $N^{2,0}$ is weakly locally symmetric while $N^{1,1}$ is not. This fact proves the inaudibility in the compact case of the weak local symmetry in dimension 23. Nevertheless, this local property was already proved to be inaudible by the first author and Schueth in \cite{AS.10}, using a pair of 10-dimensional closed isospectral manifolds introduced by Szab\'{o} in \cite{Sz.99}.
	
	The main results proved in this paper add two new properties to the list of inaudible Riemannian properties, being a weakly symmetric space and being a commutative space, only in the non-compact case. So, it is natural to ask what happen in the compact case. This is:
	\begin{center}
		\textit{Can one find examples of isospectral closed Riemannian manifolds proving the inaudibility of being weakly symmetric, commutative or a g.o. space in the compact case?}
	\end{center}
	Therefore, it is important to find new examples of isospectral non-isometric pairs of compact Riemannian manifolds.
	
	We would also like to remark that using the above mentioned classifications, we can only guarantee the inaudibility in the non-compact case of the weakly symmetric and the commutative properties in dimension 23. Moreover, using Theorem \ref{theo:clasificaciongo} and working in the same way as in the proof of Proposition \ref{prop:inaudible-go}, but using the 31-dimensional non-compact generalized Heisenberg groups $N(3,0)$ and $N(2,1)$ with 7-dimensional center, we also obtain the inaudibility in the non-compact case of the g.o. property in dimension 31. This means, we can only guarantee the inaudibility of the g.o. property in the non-compact case in dimension 23 and 31. Then, a second question arises:
	\begin{center}
		\textit{Can one find examples of isospectral Riemannian manifolds in another dimensions which prove the inaudibility of the same symmetric-like properties? 
		}
	\end{center}
	We hope to answer these questions in forthcoming papers using new examples of closed isospectral manifolds developed by the authors.
	
	\textbf{Author contributions:} All authors contributed equally to this research and in writing the paper.
	
	\textbf{Funding:} The authors are supported by the grants GR21055 and IB18032 funded by Junta de Extremadura and Fondo Europeo de Desarrollo Regional. 
	The first author is also partially supported by grant PID2019-10519GA-C22 funded by AEI/10.13039/501100011033.
	
	\textbf{Conflicts of Interest:} The authors declare no conflict of interest. The founders had no role in the design of the study; in the collection, analyses, or interpretation of data; in the writing of the manuscript, or in the decision to publish the results.
	
    \bibliographystyle{plain}
	\bibliography{AMFB23.bib}
\end{document}